\renewcommand{\phi}{\varphi}
\theoremstyle{plain}
\newtheorem{theorem}{Theorem}
\newtheorem*{theorem*}{Theorem}
\newtheorem{lemma}{Lemma}
\newtheorem{corollary}{Corollary}
\newtheorem{definition}{Definition}
\theoremstyle{remark}
\newtheorem{remark}{Remark}
\begin{document}
\definecolor{cqcqcq}{rgb}{0.7529411764705882,0.7529411764705882,0.7529411764705882}
\title{Iteration problem for distributional chaos}

\author{Jana Hant\'akov\'a}

\address{ Mathematical Institute, Silesian University, CZ-746 01 
Opava, Czech Republic}

\email{jana.hantakova@math.slu.cz}

\thanks{The research was supported by grant SGS/18/2016 from  the Silesian University in Opava. Support of this institution is gratefully acknowledged.}

\begin{abstract} We disprove the conjecture that distributional chaos of type 3 (briefly, DC3) is iteration invariant and show that a slightly strengthened definition, denoted by DC2$\frac{1}{2}$, is preserved under iteration, i.e. $f^n$ is  DC2$\frac{1}{2}$ if and only if $f$ is too. Unlike DC3, DC2$\frac{1}{2}$ is also conjugacy invariant and implies Li-Yorke chaos. The definition of DC2$\frac{1}{2}$ is the following: a pair $(x,y)$ is DC2$\frac{1}{2}$ iff $\Phi_{(x,y)}(0)<\Phi^*_{(x,y)}(0)$, where $\Phi_{(x,y)}(\delta)$ (resp. $\Phi^*_{(x,y)}(\delta)$) is lower (resp. upper) density of times $k$ when $d(f^k(x),f^k(y))<\delta$ and  both densities are defined at 0 as limits of their values for $\delta\to 0^+$. Hence DC$2\frac{1}{2}$ shares similar properties with DC1 and DC2 but unlike them, strict DC$2\frac{1}{2}$ systems must have zero topological entropy.

{\small {2000 {\it Mathematics Subject Classification.}}
Primary 37D45; 37B40.
\newline{\small {\it Key words:} Distributional chaos; Li-Yorke chaos; iteration.}}
\end{abstract}

\maketitle
\pagestyle{myheadings}
\markboth{J. Hant\'akov\'a}
{Iteration problem for DC3}
\section{Introduction}
The study of chaotic pairs in dynamics started with Li and Yorke \cite{LY}, who studied pairs of points with the property that their orbits are neither asymptotic nor separated by any positive fixed constant. Schweizer and Sm\' ital \cite{SchSm} introduced the related concept of a distributionally chaotic pair as two points for which the statistical distribution of distances between the orbits does not converge. The existence of a single distributionally chaotic pair is equivalent to the positivity topological entropy (and some other notions of chaos) when restricted to the compact interval case. \\
Later, distributional chaos was divided into three types, DC1, DC2, and DC3, see \cite{BSS}. Relations between them and the relation between distributional chaos and Li-Yorke chaos are investigated by many authors, see e.g. \cite{BSS,O}. One can easily see from the definitions that DC1 implies DC2 and DC2 implies DC3. On the other hand, there are examples which show that DC1 is stronger than DC2 and DC2 is stronger than DC3. It is also obvious that either DC1 or DC2 implies Li-Yorke chaos. While it is proved in \cite{Li,W,SS} that DC1 and DC2 are well-defined properties of a dynamical system, DC3 appears to be very weak and unstable. Hence we propose to replace DC3 by a slightly strengthened definition, denoted by DC2$\frac{1}{2}$.\\

Recently, Li in \cite{Li} and Wang et al. in \cite{W} proved that DC1 and DC2 are iteration invariants and posed an open question whether DC3 is also preserved under iteration. Dvo\v rakov\'a proved in \cite{JD} one implication -  if a function $f$ is DC3, then $f^n$ is DC3, for every $n\in\mathbb {N}$, and conjectured that the opposite implication also holds. We disprove this conjecture by finding a dynamical system which has a DC3 pair with respect to $f^2$ but no DC3 pairs with respect to $f$.\\

It is proved in \cite{BSS} that DC3 does not imply chaos in the sense of Li and Yorke and it is not invariant with respect to topological conjugacy. Hence the definition of DC3 was strengthened in such a way that it is preserved under conjugacy and implies Li-Yorke chaos, but is still weaker than DC2 -- the new definition was denoted by DC2$\frac{1}{2}$ (see \cite{DRR}). The only difference between DC2$\frac{1}{2}$ and DC3 is the following: a pair $(x,y)$ is DC3 iff $\Phi(\delta)<\Phi^*(\delta)$, for every $\delta$ in some interval $I$. We say that a pair $(x,y)$ is DC2$\frac{1}{2}$ iff $\Phi(0)<\Phi^*(0)$, where the distribution functions at 0 are defined as limits of their values for $\delta\to 0^+$. This change in definition ensures that DC2$\frac{1}{2}$ is conjugacy invariant, implies Li-Yorke chaos and we will show in this paper that it is (like DC1 and DC2) iteration invariant.\\

We call a DC$i$ system strict if it possesses no distributionally chaotic pairs of types smaller than $i$. By results of \cite{DL}, positive topological entropy implies existence of DC2 pairs, therefore strict DC2$\frac{1}{2}$ systems must have zero topological entropy. Another strengthened distributional chaos, denoted by DC1$\frac{1}{2}$, was proposed by authors in \cite{DL}. DC1$\frac{1}{2}$ chaos is stronger than $DC2$ and is implied by positive topological entropy.\\

However, it should be noticed that the distributional chaos in cited \cite{Li,W,JD} was defined as the existence of a single distributionally scrambled pair, but nowadays it is generally assumed that distributional chaos means the existence of an uncountable distributionally scrambled set. That arises a natural question - are results for the existence of uncountable chaotic sets for iterates of a function the same as for the existence of pairs? The answer to this question strongly depends on the type of distributional chaos.\\

The paper is organized as follows: first two sections are introductory. In the third we show that distributional chaos of type 3 is not iteration invariant by creating a counterexample. The fourth section investigates distributional chaos of type 2$\frac{1}{2}$ and proves that it is iteration invariant. The fifth section discusses whether the existence of an infinite or an uncountable distributionally scrambled set is preserved under iteration.

\section{Terminology}
Let $(X,d)$ be a non-empty compact metric space.  A pair $(X,f)$, where $f$ is a continuous self-map acting on $X$, is called a \emph{topological dynamical system}.  We define the \emph{forward orbit} of $x$, denoted by $Orb^+_f(x)$ as the set $\{f^n(x):n\geq0\}$. We say that pair $(x,y)$ is \emph{asymptotic} if $\lim_{n\to\infty}d(f^i(x),f^i(y))=0$ or \emph{eventually equal} if there is $j\in\mathbb{N}$ such that $f^j(x)=f^j(y)$. 

\begin{definition}
For a pair $(x_1, x_2)$ of
points in $X$, define the \emph{lower distribution function} generated by $f$ as
$$\Phi_{(x_1, x_2)}(\delta)=\displaystyle\liminf_{m\to\infty}\frac{1}{m}\#\{0 \le k \le m;d(f^k(x_1),f^{k}(x_2))<\delta\},$$
and the \emph{upper distribution function} as 
$$\Phi^*_{(x_1, x_2)}(\delta)=\displaystyle\limsup_{m\to\infty}\frac{1}{m}\#\{0 \le k \le m;d(f^k(x_1),f^{k}(x_2))<\delta\},$$
where $\#A$ denotes the cardinality of the set $A$.\\ 
A pair $(x_1, x_2)\in X^2$ is called 
\emph{distributionally chaotic of type 1 (briefly DC1)} if 
$$\Phi^*_{(x_1, x_2)}(\delta)=1, \mbox{  for every $0<\delta\le \text{diam }X$}$$
  and  
  $$ \Phi_{(x_1, x_2)}(\epsilon)=0, \mbox{  for some  }0<\epsilon\le \text{diam }X ,$$
\emph{distributionally chaotic of type 2 (briefly DC2)} if 
$$\Phi^*_{(x_1, x_2)}(\delta)=1, \mbox{  for every $0<\delta\le \text{diam }X$}$$
  and  
  $$ \Phi_{(x_1, x_2)}(\epsilon)< 1,\mbox{  for some  }0<\epsilon\le \text{diam }X ,$$
\emph{distributionally chaotic of type 2$\frac{1}{2}$ (briefly DC2$\frac{1}{2}$)} if there exist numbers $c,q>0$ such that
  $$\Phi_{(x_1, x_2)}(\delta)<c< \Phi^*_{(x_1, x_2)}(\delta),  \mbox{  for every $0<\delta\le q,$}$$
\emph{distributionally chaotic of type 3 (briefly DC3)} if $$ \Phi_{(x_1, x_2)}(\delta)<\Phi^*_{(x_1, x_2)}(\delta), \mbox{  for every $\delta\in (a,b),$ where }0\leq a<b\le \text{diam }X.$$
The dynamical system $(X,f)$ is \emph{distributionally chaotic of type $i$} (DC$i$ for short), where $i=1,2,2\frac{1}{2},3$, if there is an uncountable set $S\subset X$ such that any pair of
distinct points from $S$ is distributionally scrambled of type $i$.
\end{definition}

We can define both distribution functions at $0$ as the limit $\Phi_{(x_1, x_2)}(0)=\lim_{\delta\to0^+}\Phi_{(x_1, x_2)}(\delta)$ and $\Phi^*_{(x_1, x_2)}(0)=\lim_{\delta\to 0^+}\Phi^*_{(x_1, x_2)}(\delta)$. Then $(x_1,x_2)$ being DC1 is equivalent to

$$\Phi^*_{(x_1, x_2)}(0)=1, \Phi_{(x_1, x_2)}(\epsilon)=0, \mbox{  for some  }0<\epsilon\le \text{diam }X ;$$
DC2 is equivalent to

$$\Phi^*_{(x_1, x_2)}(0)=1, \Phi_{(x_1, x_2)}(0)< 1;$$
DC$2\frac 12$ is equivalent to

$$\Phi_{(x_1, x_2)}(0)<\Phi^*_{(x_1, x_2)}(0).$$

\section{Iteration problem for DC3}
\begin{theorem} Distributional chaos of type 3 with respect to $f^2$ doesn't imply distributional chaos of type 3 with respect to $f$.
\end{theorem}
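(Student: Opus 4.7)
The plan is to construct an explicit counterexample: a compact metric space $X$ and a continuous map $f\colon X\to X$ such that $f^2$ admits a DC3 pair while $f$ does not.

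The key idea is that DC3 for $f^2$ only detects the even-indexed iterates of $f$, so the odd-indexed iterates of $f$ on a distinguished pair $(x_1,x_2)$ can be engineered to cancel the distance oscillations that witness DC3 under $f^2$. Specifically, I would aim for $a_k=d(f^{2k}(x_1),f^{2k}(x_2))$ to take only two values $\alpha<\beta$ with $\liminf_{m}\tfrac{1}{m}\#\{k<m:a_k=\alpha\}=0$ and $\limsup_{m}\tfrac{1}{m}\#\{k<m:a_k=\alpha\}=1$, and for $b_k=d(f^{2k+1}(x_1),f^{2k+1}(x_2))$ to take the opposite value to $a_k$ at every step, so that $a_k+b_k=\alpha+\beta$. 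Under $f^2$, the oscillation of $(a_k)$ yields $\Phi_{(x_1,x_2)}(\delta)=0<1=\Phi^*_{(x_1,x_2)}(\delta)$ for every $\delta\in(\alpha,\beta)$, so $(x_1,x_2)$ is DC3. Under $f$, by contrast, each consecutive pair of iterates $(2k,2k+1)$ contributes exactly one distance below any $\delta\in(\alpha,\beta]$, forcing $\Phi_{(x_1,x_2)}(\delta)=\Phi^*_{(x_1,x_2)}(\delta)=\tfrac{1}{2}$ on this interval; the two distribution functions also agree trivially for $\delta\le\alpha$ and $\delta>\beta$, so the pair is not DC3 for $f$.

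To realize this pattern I would take $X$ to consist of two $f$-orbits on a ``double-step'' structure: points $p_k,q_k$ (with $f(p_k)=q_k$, $f(q_k)=p_{k+1}$) and primed copies $p_k',q_k'$ for the second orbit, so that $f^2$ advances the index within a single sub-sequence while $f$ interleaves $p$'s and $q$'s. The metric sets $d(p_k,p_k')$ to $\alpha$ or $\beta$ according to a $\{0,1\}$-valued sequence $(c_k)$ arranged in blocks of rapidly growing length with upper density $1$ and lower density $0$, while $d(q_k,q_k')$ takes the opposite value, and distances between non-corresponding points are set uniformly large. A small collection of accumulation points is adjoined so that $X$ is compact and $f$ extends continuously, the block structure of $(c_k)$ being chosen precisely to guarantee unique limits of the relevant subsequences.

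The principal obstacle is not the construction of the DC3 pair for $f^2$, which is forced by the oscillating density of $(c_k)$, but verifying that \emph{no} pair in $X$ is DC3 for $f$. This requires a case analysis of all pairs: points on the same orbit produce pre-periodic or eventually constant distance sequences; pairs involving accumulation points produce distance sequences taking only finitely many values with convergent Ces\`aro densities; the designated pair and its shifts are handled by the compensation computation above. The delicate step is to choose the block lengths of $(c_k)$ and the geometry of the adjoined accumulation points so that each of these cases yields convergent distance densities, thereby ruling out any interval $(a,b)$ of strict inequality $\Phi<\Phi^*$ under $f$.
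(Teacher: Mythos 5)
Your mechanism is exactly the one the paper uses: make the even iterates of the distinguished pair carry an oscillating density of close approaches (so the pair is DC3 for $f^2$ but, since the distances stay bounded away from $0$, never DC2, which is forced because DC2 is iteration-invariant), and arrange the odd iterates to compensate so that the combined density converges and the pair fails DC3 for $f$. In the paper this compensation is realized by a ``mirror'' oscillator: $H$ swaps the oscillators above $K=[2,3]$ and $J=[-2,-1]$ at every step, so each epoch is half synchronic and half asynchronic and $\Psi_{(x,y)}=\Psi^*_{(x,y)}=\frac{1}{2}(\Phi^e_{(x,y)}+\Phi^o_{(x,y)})$, while $H^2$ coincides with $G^2$ on the relevant orbits and retains the DC3 oscillation.

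The genuine gap is that your counterexample is never actually constructed, and the part you defer is precisely where all the work lies. Prescribing $d(p_k,p_k')\in\{\alpha,\beta\}$ on abstract orbit points and ``adjoining a small collection of accumulation points'' is not innocuous: compactness forces $(p_k)$ and $(p_k')$ to accumulate, and since $d(p_k,p_k')$ does not converge, the limit set must contain at least two distinct configurations (one at distance $\alpha$, one at distance $\beta$), $f$ must act continuously and consistently on them, and every pair involving a limit point must then be checked to have convergent close-approach densities. Your assertion that such pairs ``produce distance sequences taking only finitely many values with convergent Ces\`aro densities'' is exactly what needs proof, not an observation; in the paper this is the content of the oscillator computation showing $\Phi_{(x,y)}(\delta)=\Phi^*_{(x,y)}(\delta)=|J_\delta|$ for an oscillating point against a fixed (limit) point, which requires the growth condition $\lim_i n_i/s_i=0$ on the block lengths. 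You also leave unspecified the within-orbit distances and the verification of the triangle inequality for the ``uniformly large'' assignments. None of these obstacles is fatal --- the paper shows the idea can be realized with piecewise-isometric oscillators on real intervals crossed with $\{1/k:k\in\mathbb{N}\}\cup\{0\}$, where the metric, compactness, continuity and the classification of all pairs come for free --- but as written your argument establishes only the combinatorial core of the example, not the existence of a compact system exhibiting it.
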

Proof of this theorem consists of finding a dynamical system which has a DC3 pair with respect to $f^2$ but no DC3 pairs with respect to $f$. The main obstacle in creating such system is that by \cite{Li}, a pair is DC2 with respect to $f$ iff it is DC2 with respect to $f^2$, hence the desired system has to be without any DC2 pairs. There are only few such examples in the literature (see \cite{BSS}, \cite{Li}, \cite{O}). \\
In this paper we will gradually modify a very simple dynamical system from Section 3.1 to get a DC3 system in Section 3.2 and then prove our theorem in Section 3.3.

\subsection{Oscillator}
Our first goal is to construct an oscillatoric dynamical system, where points regularly move from the right endpoint of some interval to the left endpoint (and back).
Let $I$ be the unit interval and $g_m:I\rightarrow I$ be a mapping defined as 
\begin{equation}\label{definition_g} g_m (x)= \left\{
  \begin{array}{l l}
    0&\quad 0\leq x<\frac{1}{m}\\
    x-\frac{1}{m}& \quad \frac{1}{m}\leq x \leq 1\\
    
  \end{array} \right.
  \end{equation}
  and $\hat{g}_m:I\rightarrow I$ defined as
  \begin{equation}\label{definition_hg}\hat{g}_m (x)= \left\{
  \begin{array}{l l}
    x+\frac{1}{m}&\quad 0\leq x<1-\frac{1}{m}\\
    1& \quad 1-\frac{1}{m}\leq x \leq 1.\\
    
  \end{array} \right.
  \end{equation}
Dynamical system $O_1$ consists of a compact metric space $I \times (\{\frac{1}{k}:k\in\mathbb{N}\}\cup\{0\})$ endowed with max-metric and a function $F$ such that, for $x\in I$,
$$F([x,0])=[x,0]$$
$$F([x,\frac{1}{k}])=[f_k(x),\frac{1}{k+1}],$$
with 
 \begin{equation}\label{definition_f}f_k= \left\{
  \begin{array}{l l l}
    g_m&\quad s_m+2im<k\leq s_m+2im+m&\\
    \hat{g}_m& \quad s_m+(2i+1)m<k\leq s_m+(2i+2)m&\quad i\in\{0,1,\ldots,n_m-1\},\\
    
  \end{array} \right.
  \end{equation}
  where $s_m=n_1\cdot 2 \cdot 1+n_2\cdot 2\cdot 2+\ldots+n_{m-1}\cdot 2\cdot (m-1)$ and $\{n_m\}_{m=1}^{\infty}$ is an increasing sequence of integers which will be specified later. Notice that point $[1,1]$ moves from left to the right applying $m$-times $g_m$ and then from left to right applying $m$-times $\hat{g}_m$ and repeat this movement $n_m$-times in each time interval $(s_m,s_{m+1})$. Other points are fixed, lie on the orbit of $[1,1]$ or are eventually mapped on the orbit of $[1,1]$. We will show that $O$ is not distributionally chaotic - since points on the orbit of $[1,1]$ are asymptotic to $[1,1]$, it is enough to show that $(x,y)$ is not DC3, where $x=[1,1]$ and $y=[z,0]$, where $z\in I$. Because the second coordinate of $x$ decreases with time to zero and we are considering max-metric, it is sufficient to prove 
  $$\lim_{n\to\infty}\frac{1}{n}\#\{i\leq n: |f^i(1)-z|<\delta\}$$
  exists, where $f^i=f_i\circ f_{i-1}\circ\ldots\circ f_2\circ f_1$.\\
  Denote $J_{\delta}=(y-\delta,y+\delta)\cap I$ and calculate, how many times point hits fixed subinterval $J_{\delta}$ if it oscillates with velocity $\frac{1}{m}$ for $2m$ times between right endpoint and left endpoint of $I$. Denote the number of hitting times by $P_m$. We estimate $P_m$ by 
  \begin{equation}\label{es}|J_{\delta}|\cdot 2m-2\leq P_m\leq |J_{\delta}|\cdot 2m+2.\end{equation}
For every $n\in\mathbb{N}$, there is $m\in\mathbb{N}$ such that $$n=s_m+2m\alpha+\beta,$$ where $0\leq \alpha<n_m$ and $0\leq \beta<2m.$ Since $$\#\{i\leq n: |f^i(1)-z|<\delta\}=P_1n_1+P_2n_2+\ldots+P_{m-1}n_{m-1}+P_m\alpha+\gamma,\quad 0\leq\gamma\leq\beta,$$
we can estimate the expression according to (\ref{es})  from bellow by $$(|J_{\delta}|\cdot 2\cdot 1-2)n_1+(|J_{\delta}|\cdot 2\cdot 2-2)n_2+\ldots+(|J_{\delta}|\cdot 2\cdot m-2)\alpha$$ and from above by
$$(|J_{\delta}|\cdot 2\cdot 1+2)n_1+(|J_{\delta}|\cdot 2\cdot 2+2)n_2+\ldots+(|J_{\delta}|\cdot 2\cdot m+2)\alpha+2m.$$
We define the sequence $\{n_i\}_{i=1}^{\infty}$ in such a way that $\lim_{i\to\infty}\frac{n_{i}}{s_i}=0$ to get
\begin{multline}\lim_{m\to\infty}\frac{1}{s_m+2m\alpha+\beta}(|J_\delta|\cdot(s_m+2m\alpha)-(2n_1+2n_2+2n_{m-1}\ldots+2\alpha))=\\=\lim_{m\to\infty}\frac{1}{s_m+2m\alpha+\beta}(|J_\delta|\cdot(s_m+2m\alpha)+(2n_1+2n_2+\ldots+2n_{m-1}+2\alpha+2m))=|J_{\delta}|,\end{multline}
implying $\Phi_{(x, y)}(\delta)=\Phi^*_{(x, y)}=|J_{\delta}|.$\\
\begin{remark} The same calculation holds without the assumption $z\in I$, i.e. $\Phi_{(x, y)}(\delta)=\Phi^*_{(x, y)}=|J_{\delta}|$ for any fixed point $y=[z,0]$, $z\in\mathbb{R}$, and $x=[1,1]$.  \end{remark}
\subsection{Distributionally chaotic oscillators}
We extend the dynamical system from previous section by adding one more oscillator with distance 1 to the right side of $O_1$. Let $K$ be the interval $[2,3]$ and $h_m:K\rightarrow K$ be a mapping defined as 
\begin{equation}\label{definition_h} h_m (x)= \left\{
  \begin{array}{l l}
    2&\quad 2\leq x<2+\frac{1}{m}\\
    x-\frac{1}{m}& \quad 2+\frac{1}{m}\leq x \leq 3\\
    
  \end{array} \right.
  \end{equation}
  and $\hat{h}_m:K\rightarrow K$ defined as
  \begin{equation}\label{definition_hh}\hat{h}_m (x)= \left\{
  \begin{array}{l l}
    x+\frac{1}{m}&\quad 2\leq x<3-\frac{1}{m}\\
    3& \quad 3-\frac{1}{m}\leq x \leq 3.\\
    
  \end{array} \right.
  \end{equation}
  Dynamical system $O_2$ consists of compact metric space $K\times (\{\frac{1}{k}:k\in\mathbb{N}\}\cup\{0\})$ and a function $\hat{F}$ defined by
  \begin{equation}\label{definition_F}
  \begin{array}{l l}
    \hat{F}([x,0])=[x,0]&\quad x\in K\\
    \hat{F}([x,\frac{1}{k}])=[\hat{f}_k(x),\frac{1}{k+1}]&\quad x\in K, k \in\mathbb{N}.
  \end{array} 
  \end{equation}
The function $\hat{f}_k$ is defined for $k\in\{s_m,s_m+1,\ldots,s_{m+1}\}$ differently for even and odd $m$. For odd $m$,
   \begin{equation}\label{definition_hf}\hat{f}_k= \left\{
  \begin{array}{l l l}
    Id&\quad s_m<k\leq s_m+2m  \\
    h_m&\quad s_m+m<k\leq s_m+2m\\
    \hat{h}_m&\quad s_m+2im<k\leq s_m+2im+m  \\
    h_m& \quad s_m+(2i+1)m<k\leq s_m+(2i+2)m&\quad i\in\{1,\ldots,n_m-1\},
    
  \end{array} \right.
  \end{equation}
  for even $m$,
  \begin{equation}\label{definition_hhf}\hat{f}_k= \left\{
  \begin{array}{l l l}
    Id&\quad s_m<k\leq s_m+2m  \\
    \hat{h}_m&\quad s_m+m<k\leq s_m+2m\\
    h_m&\quad s_m+2im<k\leq s_m+2im+m  \\
    \hat{h}_m& \quad s_m+(2i+1)m<k\leq s_m+(2i+2)m&\quad i\in\{1,\ldots,n_m-1\}.
    
  \end{array} \right.
  \end{equation}
  $O_2$ is made similarly as $O_1$, we are just using $\hat{f}_k$ instead of $f_k$. For better understanding of formulas in definition of $\hat{f}_k$ see Figure \ref{fig:1}.
  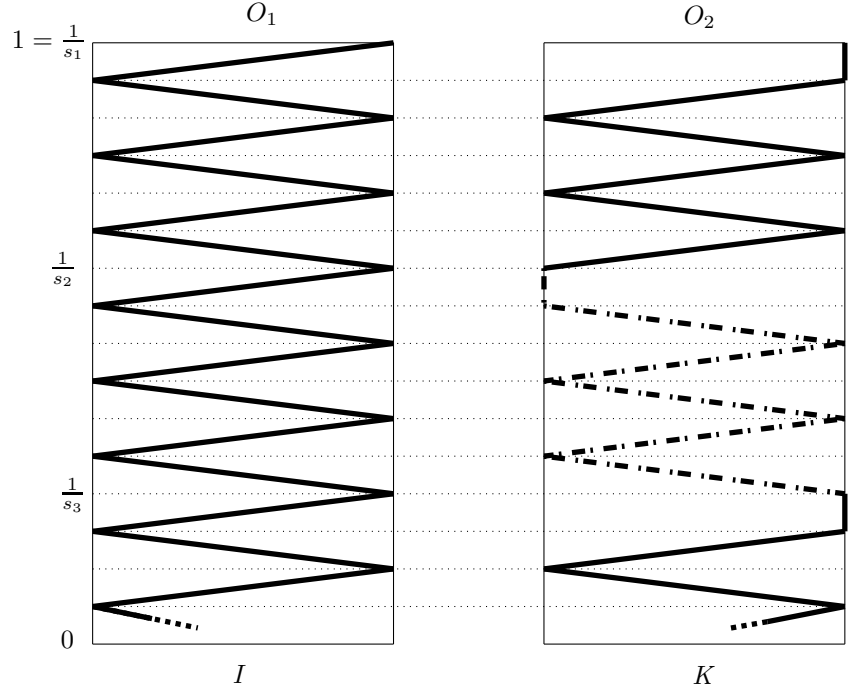
\begin{figure}[h]

\begin{tikzpicture}
\clip(-1.3,-0.7) rectangle (10.051050321158185,9.384276094597881);
\draw (0.,8.)-- (4.,8.);
\draw (4.,8.)-- (4.,0.);
\draw (0.,8.)-- (0.,0.);
\draw (0.,0.)-- (4.,0.);
\draw (6.,8.)-- (10.,8.);
\draw (10.,8.)-- (10.,0.);
\draw (10.,0.)-- (6.,0.);
\draw (6.,0.)-- (6.,8.);
\draw [dotted] (0.,7.5)-- (10.,7.5);
\draw [dotted] (0.,7.)-- (10.,7.);
\draw [dotted] (0.,6.5)-- (10.,6.5);
\draw [dotted] (0.,6.)-- (10.,6.);
\draw [dotted] (0.,5.5)-- (10.,5.5);
\draw [dotted] (0.,5.)-- (10.,5.);
\draw [dotted] (0.,4.5)-- (10.,4.5);
\draw [dotted] (0.,4.)-- (10.,4.);
\draw [dotted] (0.,3.5)-- (10.,3.5);
\draw [dotted] (0.,3.)-- (10.,3.);
\draw [dotted] (0.,2.5)-- (10.,2.5);
\draw [dotted] (0.,2.)-- (10.,2.);
\draw [dotted] (0.,1.5)-- (10.,1.5);
\draw [dotted] (0.,1.)-- (10.,1.);
\draw [dotted] (0.,0.5)-- (10.,0.5);
\draw [line width=2.pt] (4.,8.)-- (0.,7.5);
\draw [line width=2.pt] (0.,7.5)-- (4.,7.);
\draw [line width=2.pt] (4.,7.)-- (0.,6.5);
\draw [line width=2.pt] (0.,6.5)-- (4.,6.);
\draw [line width=2.pt] (4.,6.)-- (0.,5.5);
\draw [line width=2.pt] (0.,5.5)-- (4.,5.);
\draw [line width=2.pt] (4.,5.)-- (0.,4.5);
\draw [line width=2.pt] (0.,4.5)-- (4.,4.);
\draw [line width=2.pt] (4.,4.)-- (0.,3.5);
\draw [line width=2.pt] (0.,3.5)-- (4.,3.);
\draw [line width=2.pt] (4.,3.)-- (0.,2.5);
\draw [line width=2.pt] (0.,2.5)-- (4.,2.);
\draw [line width=2.pt] (4.,2.)-- (0.,1.5);
\draw [line width=2.pt] (0.,1.5)-- (4.,1.);
\draw [line width=2.pt] (4.,1.)-- (0.,0.5);
\draw [line width=2.pt,dotted] (0.,0.5)-- (1.3981410439421973,0.2145171451162878);
\draw [line width=2.pt] (0.7889770769390853,0.3389007823364577)-- (0.,0.5);
\draw [line width=2.pt] (6.,7.)-- (10.,6.5);
\draw [line width=2.pt] (10.,6.5)-- (6.,6.);
\draw [line width=2.pt] (6.,7.)-- (10.,7.5);
\draw [line width=2.pt] (6.,6.)-- (10.,5.5);
\draw [line width=2.pt] (10.,5.5)-- (6.,5.);
\draw [line width=2.pt] (10.,8.)-- (10.,7.5);
\draw [line width=2.pt,dash pattern=on 1pt off 2pt on 5pt off 4pt] (6.,5.)-- (6.,4.5);
\draw [line width=2.pt,dash pattern=on 1pt off 2pt on 5pt off 4pt] (6.,4.5)-- (10.,4.);
\draw [line width=2.pt,dash pattern=on 1pt off 2pt on 5pt off 4pt] (10.,4.)-- (6.,3.5);
\draw [line width=2.pt,dash pattern=on 1pt off 2pt on 5pt off 4pt] (6.,3.5)-- (10.,3.);
\draw [line width=2.pt,dash pattern=on 1pt off 2pt on 5pt off 4pt] (10.,3.)-- (6.,2.5);
\draw [line width=2.pt,dash pattern=on 1pt off 2pt on 5pt off 4pt] (6.,2.5)-- (10.,2.);
\draw [line width=2.pt] (10.,2.)-- (10.,1.5);
\draw [line width=2.pt] (10.,1.5)-- (6.,1.);
\draw [line width=2.pt] (6.,1.)-- (10.,0.5);
\draw [line width=2.pt] (10.,0.5)-- (8.9709778184326,0.2982609957030703);
\draw [line width=2.pt,dotted] (8.9709778184326,0.2982609957030703)-- (8.420661086005177,0.20255373788960537);
\draw (1.9189143244414386,8.647879372490642) node[anchor=north west] {$O_1$};
\draw (7.736448429088616,8.61105953638528) node[anchor=north west] {$O_2$};
\draw (1.734815143914629,-0.15206145669087243) node[anchor=north west] {\emph{I}};
\draw (7.846907937404702,-0.17047137474355342) node[anchor=north west] {\emph{K}};
\draw (-0.54550559603831754,0.29772698631006644) node[anchor=north west] {0};
\draw (-1.2,8.355829782227237) node[anchor=north west] {$1=\frac{1}{s_1}$};
\draw (-0.6952940390392555,5.336603221587554) node[anchor=north west] {$\frac{1}{s_2}$};
\draw (-0.5664246126704888,2.3357865790005523) node[anchor=north west] {$\frac{1}{s_3}$};
\end{tikzpicture}
  \caption{Movement of points $x=[1,1]$ in $O_1$ and $y=[3,1]$ in $O_2$. }
   \label{fig:1}
\end{figure}
 Adding $m$ identity mappings at the beginning of each time interval $(s_m,s_{m+1})$ causes change in the movement of $y=[3,1]$ - for $m$ odd, $y$ starts to oscillate from the right endpoint but for $m$ even, $y$ starts to oscillate from the left endpoint. Nevertheless, these identities doesn't affect the calculation of distribution functions of $y$ and some fixed point in $K$ - we get the same results as in (5). We conclude that DC3 pairs are neither in $O_1$ nor in $O_2$.\\
 Consider the union of dynamical systems $O_1\cup O_2$ defined naturally as a space $(I\cup K)\times (\{\frac{1}{k}:k\in\mathbb{N}\}\cup\{0\})$ with a function $G$ such that $G$ restricted to $I\times (\{\frac{1}{k}:k\in\mathbb{N}\}\cup\{0\})$ is equal to $F$ and $G$ restricted to $K\times (\{\frac{1}{k}:k\in\mathbb{N}\}\cup\{0\})$ is equal to $\hat{F}$. \\
 Now we investigate behavior of pairs in $O_1\cup O_2$. We have already seen that there are no DC3 pairs inside $O_1$ or  $O_2$.
By Remark 1, any fixed point in $K$ (respectively in $I$) and $y=[1,1]$ (respectively $x=[3,1]$) also can't be DC3.
All other possible pairs consist of points asymptotic or eventually equal to $x=[1,1]$ or $y=[3,1]$, so it is sufficient to examine only $\Phi_{(x, y)}$ and $\Phi^*_{(x, y)}$.\\
 In time interval $(s_m+2m, s_{m+1})$, where $m$ is even, are points $x$ and $y$ \emph{synchronic} (see the dashed part of trajectory of $y$ in Figure \ref{fig:1}) - they maintain the same distance.  If we denote the first coordinate of $G^i(x)$ by $x_i$ and the first coordinate of $G^i(y)$ by $y_i$, then
$$y_i=2+x_i, \quad\text{ for } s_m+2m<i\leq s_m+2m n_m, \quad m \text{ is even },$$
therefore $\#\{s_m+2m<i\leq s_m+2m n_m, d(F^i(x), F^i(y))<\delta\}$ is either 0, for $\delta\leq2$, or $2mn_m-2m$, for $\delta>2$.\\
Since the sequence $\{n_i\}_{i=1}^{\infty}$  grows very quickly, i.e. $\lim_{i\to\infty}\frac{n_{i}}{s_i}=0$, 
\begin{equation}\label{even} \Phi^e_{(x,y)}=\lim_{\substack{m\to\infty \\ m \text{ is even}}}\frac{1}{s_m+2mn_m}\{ 0<i\leq s_m+2mn_m, d(G^i(x), G^i(y))<\delta\}=
 \left\{
  \begin{array}{l l}
    0&\quad \delta\leq2\\
   1& \quad \delta>2.\\
    
  \end{array} \right.
  \end{equation}
In time interval $(s_m+2m, s_{m+1})$, where $m$ is odd, are points $x$ and $y$ \emph{asynchronic} (see solid parts of trajectory of $y$ in Figure \ref{fig:1}) - $x$ is on the left endpoint of its interval if $y$ is on the right endpoint of its interval (and vice versa), therefore
$$y_i=3-x_i, \quad\text{ for } s_m+2m<i\leq s_m+2m n_m, \quad m \text{ is odd }.$$
From the perspective of point $x$, $y$ is approaching to $x$ to the distance 1 and then is leaving to the distance 3 with doubled speed $\frac {2}{m}$.
 This type of movement (one point is fixed and one point is oscillating) was investigated in the previous section - see calculation between (4) - (5) and Remark 1, hence
\begin{equation}\label{odd} \Phi^o_{(x,y)}=\lim_{\substack{m\to\infty \\ m \text{ is odd}}}\frac{1}{s_m+2mn_m}\{ 0<i\leq s_m+2mn_m, d(G^i(x), G^i(y))<\delta\}=
 \left\{
  \begin{array}{l l}
    0&\quad \delta\leq1\\
   \frac{\delta-1}{2}& \quad 1<\delta\leq 3\\
   1&\quad \delta> 3.\\ 
  \end{array} \right.
  \end{equation}  
  Finally we can conclude
  $$\Phi_{(x,y)}=\min\{ \Phi^o_{(x,y)}, \Phi^e_{(x,y)}\}, \quad \Phi^*_{(x,y)}=\max\{ \Phi^o_{(x,y)}, \Phi^e_{(x,y)}\}.$$
  By (\ref{even}) and (\ref{odd}), $\Phi_{(x,y)}(\delta)<\Phi^*_{(x,y)}(\delta)$, for $\delta\in (1,3)$, hence $(x,y)$ - and all pairs consisting of points asymptotic or eventually equal to $x$ and $y$ - is DC3.   \\

 \subsection{Iteration problem}
Dvo\v r\'akov\'a in \cite{JD} proved that if $(x,y)$ is DC3 pair with respect to $G$ then there is $j\in\{0,1\}$ such that $(G^j(x),G^j(y))$ is DC3 pair with respect to $G^2$. We keep the notation from previous sections and will define a new function $H$ such that $H^{2}(z)=G^{2}(z)$, for $z\in Orb^+_G(x)\cup Orb^+_G(y)$. Hence $(G^j(x),G^j(y))$ remains DC3 pair with respect to $H^2$ but there will be no DC3 pairs with respect to $H$ which completes the proof of Theorem 1.\\

We will add one more oscillator with distance 1 to the left side of $O_1$. 
Let $I,K$ be intervals and $f_k, \hat{f}_k$ be functions from previous sections. 
Let $J$ be the interval $[-2,-1]$ and $l_m:J\rightarrow J$ be a mapping defined as 
\begin{equation}\label{definition_l} l_m (x)= \left\{
  \begin{array}{l l}
    -2&\quad -2\leq x<-2+\frac{1}{m}\\
    x-\frac{1}{m}& \quad -2+\frac{1}{m}\leq x \leq -1\\
    
  \end{array} \right.
  \end{equation}
  and $\hat{l}_m:J\rightarrow J$ defined as
  \begin{equation}\label{definition_hl}\hat{l}_m (x)= \left\{
  \begin{array}{l l}
    x+\frac{1}{m}&\quad -2\leq x<-1-\frac{1}{m}\\
    -1& \quad -1-\frac{1}{m}\leq x \leq -1.\\
 \end{array} \right.
  \end{equation}

The definition of $\tilde{f}_k$ is symmetrical to $\hat{f}_k$, we  use $l_m$ (resp. $\hat{l}_m$) instead of $\hat{h}_m$ (resp.  $h_m$).  For odd $m$,
   \begin{equation}\label{definition_tf}\tilde{f}_k= \left\{
  \begin{array}{l l l}
    Id&\quad s_m<k\leq s_m+2m  \\
    \hat{l}_m&\quad s_m+m<k\leq s_m+2m\\
    l_m&\quad s_m+2im<k\leq s_m+2im+m  \\
    \hat{l}_m& \quad s_m+(2i+1)m<k\leq s_m+(2i+2)m&\quad i\in\{1,\ldots,n_m-1\},
    
  \end{array} \right.
  \end{equation}
  for even $m$,
  \begin{equation}\label{definition_hhf}\tilde{f}_k= \left\{
  \begin{array}{l l l}
    Id&\quad s_m<k\leq s_m+2m  \\
    l_m&\quad s_m+m<k\leq s_m+2m\\
    \hat{l}_m&\quad s_m+2im<k\leq s_m+2im+m  \\
    l_m& \quad s_m+(2i+1)m<k\leq s_m+(2i+2)m&\quad i\in\{1,\ldots,n_m-1\}.
    
  \end{array} \right.
  \end{equation}

Dynamical system $O$ consists of compact metric space $(I\cup K\cup J)\times (\{\frac{1}{k}:k\in\mathbb{N}\}\cup\{0\})$ and a function $H$ defined by
  \begin{equation}\label{definition_H}
 \begin{array}{l l}
H([x,0])=[x,0]&\quad x\in I \cup K\cup J\\
H([x,\frac{1}{k}])=[f_k(x),\frac{1}{k+1}]&\quad x\in I, k \in\mathbb{N}\\
 H([x,\frac{1}{k}])=[1-\hat{f}_k(x),\frac{1}{k+1}]&\quad x\in K, k \in\mathbb{N}\\
 H([x,\frac{1}{k}])=[1-\tilde{f}_k(x),\frac{1}{k+1}]&\quad x\in J, k \in\mathbb{N}.
 \end{array} 
 \end{equation}
\begin{figure}
\begin{tikzpicture}[scale=0.9]
\clip(-6.5,-0.5) rectangle (10.5,8.5);
\draw (0.,8.)-- (4.,8.);
\draw (4.,8.)-- (4.,0.);
\draw (0.,8.)-- (0.,0.);
\draw (0.,0.)-- (4.,0.);
\draw (6.,8.)-- (10.,8.);
\draw (10.,8.)-- (10.,0.);
\draw (10.,0.)-- (6.,0.);
\draw (6.,0.)-- (6.,8.);
\draw [dotted] (0.,7.5)-- (10.,7.5);
\draw [dotted] (0.,7.)-- (10.,7.);
\draw [dotted] (0.,6.5)-- (10.,6.5);
\draw [dotted] (0.,6.)-- (10.,6.);
\draw [dotted] (0.,5.5)-- (10.,5.5);
\draw [dotted] (0.,5.)-- (10.,5.);
\draw [dotted] (0.,4.5)-- (10.,4.5);
\draw [dotted] (0.,4.)-- (10.,4.);
\draw [dotted] (0.,3.5)-- (10.,3.5);
\draw [dotted] (0.,3.)-- (10.,3.);
\draw [dotted] (0.,2.5)-- (10.,2.5);
\draw [dotted] (0.,2.)-- (10.,2.);
\draw [dotted] (0.,1.5)-- (10.,1.5);
\draw [dotted] (0.,1.)-- (10.,1.);
\draw [dotted] (0.,0.5)-- (10.,0.5);
\draw [line width=2.pt] (4.,8.)-- (0.,7.5);
\draw [line width=2.pt] (0.,7.5)-- (4.,7.);
\draw [line width=2.pt] (4.,7.)-- (0.,6.5);
\draw [line width=2.pt] (0.,6.5)-- (4.,6.);
\draw [line width=2.pt] (4.,6.)-- (0.,5.5);
\draw [line width=2.pt] (0.,5.5)-- (4.,5.);
\draw [line width=2.pt] (4.,5.)-- (0.,4.5);
\draw [line width=2.pt] (0.,4.5)-- (4.,4.);
\draw [line width=2.pt] (4.,4.)-- (0.,3.5);
\draw [line width=2.pt] (0.,3.5)-- (4.,3.);
\draw [line width=2.pt] (4.,3.)-- (0.,2.5);
\draw [line width=2.pt] (0.,2.5)-- (4.,2.);
\draw [line width=2.pt] (4.,2.)-- (0.,1.5);
\draw [line width=2.pt] (0.,1.5)-- (4.,1.);
\draw [line width=2.pt] (4.,1.)-- (0.,0.5);
\draw [line width=2.pt,dotted] (0.,0.5)-- (1.3981410439421973,0.2145171451162878);
\draw [line width=2.pt] (0.7889770769390853,0.3389007823364577)-- (0.,0.5);
\draw [line width=2.pt] (6.,7.)-- (10.,6.5);
\draw [line width=2.pt] (10.,6.5)-- (6.,6.);
\draw [line width=2.pt] (6.,7.)-- (10.,7.5);
\draw [line width=2.pt] (6.,6.)-- (10.,5.5);
\draw [line width=2.pt] (10.,5.5)-- (6.,5.);
\draw [line width=2.pt] (10.,8.)-- (10.,7.5);
\draw [line width=2.pt,dash pattern=on 1pt off 5pt on 9pt off 4pt] (6.,5.)-- (6.,4.5);
\draw [line width=2.pt,dash pattern=on 1pt off 5pt on 9pt off 4pt] (6.,4.5)-- (10.,4.);
\draw [line width=2.pt,dash pattern=on 1pt off 5pt on 9pt off 4pt] (10.,4.)-- (6.,3.5);
\draw [line width=2.pt,dash pattern=on 1pt off 5pt on 9pt off 4pt] (6.,3.5)-- (10.,3.);
\draw [line width=2.pt,dash pattern=on 1pt off 5pt on 9pt off 4pt] (10.,3.)-- (6.,2.5);
\draw [line width=2.pt,dash pattern=on 1pt off 5pt on 9pt off 4pt] (6.,2.5)-- (10.,2.);
\draw [line width=2.pt] (10.,2.)-- (10.,1.5);
\draw [line width=2.pt] (10.,1.5)-- (6.,1.);
\draw [line width=2.pt] (6.,1.)-- (10.,0.5);
\draw [line width=2.pt] (10.,0.5)-- (8.9709778184326,0.2982609957030703);
\draw [line width=2.pt,dotted] (8.9709778184326,0.2982609957030703)-- (8.420661086005177,0.20255373788960537);
\draw (1.74120541903861,0.013365674901894154) node[anchor=north west] {\emph{I}};
\draw (7.850446382873925,0.050845680814994486) node[anchor=north west] {\emph{K}};
\draw (-2.,8.)-- (-6.,8.);
\draw [dotted] (4.,7.5)-- (-6.,7.5);
\draw (-2.,8.)-- (-6.,8.);
\draw (-6.,8.)-- (-6.,0.);
\draw (-6.,0.)-- (-2.,0.);
\draw (-2.,0.)-- (-2.,8.);
\draw [line width=2.pt,dash pattern=on 1pt off 5pt on 9pt off 4pt] (-2.,7.)-- (-6.,6.5);
\draw [line width=2.pt,dash pattern=on 1pt off 5pt on 9pt off 4pt] (-6.,6.5)-- (-2.,6.);
\draw [line width=2.pt,dash pattern=on 1pt off 5pt on 9pt off 4pt] (-2.,7.)-- (-6.,7.5);
\draw [line width=2.pt,dash pattern=on 1pt off 5pt on 9pt off 4pt] (-2.,6.)-- (-6.,5.5);
\draw [line width=2.pt,dash pattern=on 1pt off 5pt on 9pt off 4pt] (-6.,5.5)-- (-2.,5.);
\draw [line width=2.pt,dash pattern=on 1pt off 5pt on 9pt off 4pt] (-6.,8.)-- (-6.,7.5);
\draw [line width=2.pt] (-2.,5.)-- (-2.,4.5);
\draw [line width=2.pt] (-2.,4.5)-- (-6.,4.);
\draw [line width=2.pt] (-6.,4.)-- (-2.,3.5);
\draw [line width=2.pt] (-2.,3.5)-- (-6.,3.);
\draw [line width=2.pt] (-6.,3.)-- (-2.,2.5);
\draw [line width=2.pt] (-2.,2.5)-- (-6.,2.);
\draw [line width=2.pt,dash pattern=on 1pt off 5pt on 9pt off 4pt] (-6.,2.)-- (-6.,1.5);
\draw [line width=2.pt,dash pattern=on 1pt off 5pt on 9pt off 4pt] (-6.,1.5)-- (-2.,1.);
\draw [line width=2.pt,dash pattern=on 1pt off 5pt on 9pt off 4pt] (-2.,1.)-- (-6.,0.5);
\draw [line width=2.pt,dash pattern=on 1pt off 5pt on 9pt off 4pt] (-6.,0.5)-- (-4.9709778184325994,0.29826099570306946);
\draw [line width=2.pt,dotted] (-4.9709778184325994,0.29826099570306946)-- (-4.420661086005177,0.2025537378896046);
\draw [dotted] (4.,7.)-- (-6.,7.);
\draw [dotted] (4.,6.5)-- (-6.,6.5);
\draw [dotted] (4.,5.5)-- (-6.,5.5);
\draw [dotted] (4.,5.)-- (-6.,5.);
\draw [dotted] (4.,4.)-- (-6.,4.);
\draw [dotted] (4.,4.5)-- (-6.,4.5);
\draw [dotted] (4.,3.5)-- (-6.,3.5);
\draw [dotted] (4.,3.)-- (-6.,3.);
\draw [dotted] (4.,2.5)-- (-6.,2.5);
\draw [dotted] (4.,2.)-- (-6.,2.);
\draw [dotted] (4.,1.)-- (-6.,1.);
\draw [dotted] (4.,1.5)-- (-6.,1.5);
\draw [dotted] (4.,0.5)-- (-6.,0.5);
\draw [dotted] (4.,6.)-- (-6.,6.);
\draw (-4.392330128665401,0) node[anchor=north west] {\emph{J}};
\draw (0.006210128263798415,0.05181361300366393) node[anchor=north west] {0};
\draw (3.6850916206612784,0.05181361300366393) node[anchor=north west] {1};
\draw (5.937963645149832,0.05181361300366393) node[anchor=north west] {2};
\draw (9.661167412776057,0.03325437141324524) node[anchor=north west] {3};
\draw (-2.3208988625864294,0.03325437141324524) node[anchor=north west] {-1};
\draw (-6.0884249054413985,0.05181361300366393) node[anchor=north west] {-2};
\draw (-6.4554580797551655,8.25084673847412) node[anchor=north west] {1};
\end{tikzpicture}

 \caption{Movement of points $x=[1,1]$, $y=[3,1]$, $z=[-2,1]$. Dashed parts of trajectories of $y$ and $z$ indicate when they are synchronic with $x$ and solid parts indicate when they are asynchronic. }
   \label{fig:2}
\end{figure}
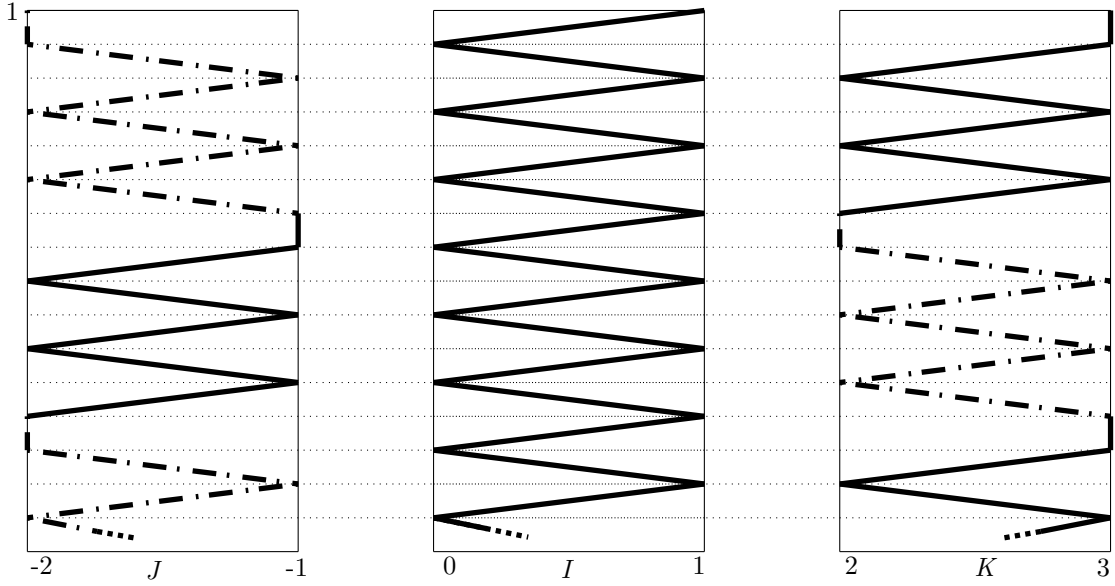

The idea of dynamical system $O$ is represented in Figure \ref{fig:2}. By (\ref{definition_H}), the oscillator above $K$ is mapped onto the oscillator above $J$ and vice versa. Moreover, these oscillators and their movement are reflection of each other with the line of symmetry $S=\frac{1}{2}$. It is easy to see that $H^{2}(x)=G^{2}(x)$, for $x\in K\times (\{\frac{1}{k}:k\in\mathbb{N}\}\cup\{0\})$, therefore existence of a DC3 pair for $G^2$ implies the same for $H^2$.\\
There are two types of points in $O$ - fixed in $I\cup K\cup J$ and oscillating. Fixed points can't be part of any DC3 pair by  arguments given in previous sections. Oscillating points are either $x=[1,1]$, $y=[3,1]$ and its mirror image $z=[-2,1]$, or points which are asymptotic or eventually equal to them. Therefore it is sufficient to investigate distribution functions among $x,y$ and $z$.\\
Denote the upper and lower distribution functions with respect to $H$ by $\Psi$ and $\Psi^*$. Pair $(y,z)$ is \emph{asynchronic} for the whole time - distance between $y$ and $z$ ranges from 3 to 5. By the similar argument as in~(\ref{odd}), 
 \begin{equation}\label{asyn} \Psi^*_{(y,z)}(\delta)=\Psi_{(y,z)}(\delta)=\lim_{m\to\infty}\frac{1}{s_m}\{ 0<i\leq s_m, d(H^i(x), H^i(y))<\delta\}=
 \left\{
  \begin{array}{l l}
    0&\quad \delta\leq1\\
   \frac{\delta-3}{2}& \quad 3<\delta\leq 5\\
   1&\quad \delta> 5.\\ 
  \end{array} \right.
  \end{equation} 
 We proceed with calculation of distribution function of $(x,y)$. In time interval $(s_m+2m, s_{m+1})$, where $m$ is even, the point $y$ is for half times above $K$ and $(x,y)$ is \emph{synchronic} - see Figure \ref{fig:2}. The other half times is $y$ above $J$, when $(x,y)$ is \emph{asynchronic}. 
Therefore we can use $\Phi^e_{(x,y)}$ (as a result of synchronic movement) and $\Phi^o_{(x,y)}$ (as a result of asynchronic movement) from (\ref{even}) and (\ref{odd}) to calculate the distribution function $\Psi^e_{(x,y)}$ as the arithmetic average 
\begin{equation} \Psi^e_{(x,y)}(\delta)=\lim_{\substack{m\to\infty \\ m \text{ is even}}}\frac{1}{s_m+2mn_m}\{ 0<i\leq s_m+2mn_m, d(H^i(x), H^i(y))<\delta\}=\frac{\Phi^e_{(x,y)}(\delta)+\Phi^o_{(x,y)}(\delta)}{2}.\end{equation} 
Similarly, in time interval $(s_m+2m, s_{m+1})$, where $m$ is odd, the point $y$ is for half times above $K$ and $(x,y)$ is \emph{asynchronic}. The other half times is $y$ above $J$, when $(x,y)$ is \emph{synchronic}. Hence
\begin{equation} \Psi^o_{(x,y)}(\delta)=\lim_{\substack{m\to\infty \\ m \text{ is odd}}}\frac{1}{s_m+2mn_m}\{ 0<i\leq s_m+2mn_m, d(H^i(x), H^i(y))<\delta\}=\frac{\Phi^e_{(x,y})(\delta)+\Phi^o_{(x,y)}(\delta)}{2},\end{equation} 
which shows $ \Psi^e_{(x,y)}= \Psi^o_{(x,y)}$.  We conclude that $$\Psi_{(x,y)}=\Psi^*_{(x,y)}=\frac{\Phi^e_{(x,y)}+\Phi^o_{(x,y)}}{2}.$$
Since $z$ is a mirror image of $y$, $(x,z)$ has the same distribution functions,
$$\Psi_{(x,z)}=\Psi^*_{(x,z)}=\frac{\Phi^e_{(x,y)}+\Phi^o_{(x,y)}}{2},$$  
therefore there are no DC3 pairs with respect to $H$. \qed 
\section{Iteration problem for $DC2\frac{1}{2}$}
\begin{theorem} For any integer $N>1$, function $f^N$ is distributionally chaotic of type $2\frac{1}{2}$ if and only if $f$ is too.
\end{theorem}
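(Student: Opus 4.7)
The plan is to reduce the iteration statement to a single-pair equivalence: for every pair $(x,y)\in X^2$, $(x,y)$ is a DC$2\frac 12$ pair for $f$ if and only if it is a DC$2\frac 12$ pair for $f^N$. Once this is established, any uncountable DC$2\frac 12$-scrambled set for $f$ is automatically DC$2\frac 12$-scrambled for $f^N$ and vice versa, so the theorem follows at once. By the equivalent characterization stated at the end of Section 2, the pair-wise statement amounts to the two equalities $\Phi_{(x,y)}(0)=\widetilde\Phi_{(x,y)}(0)$ and $\Phi^*_{(x,y)}(0)=\widetilde\Phi^*_{(x,y)}(0)$, where $\widetilde\Phi$ and $\widetilde\Phi^*$ denote the lower and upper distribution functions of $(x,y)$ computed with respect to $f^N$.

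The main tool is uniform continuity of $f,f^2,\ldots,f^N$ on the compact metric space $X$: for every $\delta>0$ there is $\eta=\eta(\delta)>0$ with $\eta(\delta)\to 0^+$ as $\delta\to 0^+$ such that $d(a,b)<\eta$ implies $d(f^i(a),f^i(b))<\delta$ for all $i=0,1,\ldots,N$. Denote
\begin{equation*}
A_\delta=\{k\ge 0:d(f^k(x),f^k(y))<\delta\},\qquad B_\delta=\{k\ge 0:d(f^{Nk}(x),f^{Nk}(y))<\delta\},
\end{equation*}
so that $\Phi,\Phi^*$ (respectively $\widetilde\Phi,\widetilde\Phi^*$) are the lower and upper densities in $\mathbb N$ of $A_\delta$ (respectively $B_\delta$).

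For $\Phi(0)\ge\widetilde\Phi(0)$ and $\Phi^*(0)\ge\widetilde\Phi^*(0)$: if $k\in B_\eta$ then uniform continuity forces $Nk+i\in A_\delta$ for every $i=0,1,\ldots,N-1$, giving the disjoint inclusion $NB_\eta+\{0,1,\ldots,N-1\}\subseteq A_\delta$. A routine counting check shows this union has lower density $\widetilde\Phi(\eta)$ and upper density $\widetilde\Phi^*(\eta)$ in $\mathbb N$, hence $\Phi(\delta)\ge\widetilde\Phi(\eta)$ and $\Phi^*(\delta)\ge\widetilde\Phi^*(\eta)$; letting $\delta\to 0^+$ (so $\eta\to 0^+$) closes both inequalities.

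For the reverse inequalities, if $k\in A_\eta$ we write $k=Nk'+j$ with $k'=\lfloor k/N\rfloor$ and $0\le j<N$; uniform continuity of $f^{N-j}$ then gives $d(f^{N(k'+1)}(x),f^{N(k'+1)}(y))<\delta$, i.e.\ $k'+1\in B_\delta$. Hence $A_\eta\subseteq\{k:\lfloor k/N\rfloor+1\in B_\delta\}$, and since at most $N$ values of $k$ share a given $k'$, one gets $\#(A_\eta\cap[0,Nm])\le N\cdot\#(B_\delta\cap[1,m+1])$ for every $m$. Dividing by $Nm$ and taking $\liminf$ (respectively $\limsup$) over $m$ produces $\Phi(\eta)\le\widetilde\Phi(\delta)$ and $\Phi^*(\eta)\le\widetilde\Phi^*(\delta)$, and $\delta\to 0^+$ finishes the argument. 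The only delicate point, rather than a genuine conceptual obstacle, is the density bookkeeping in this last step: one must check that the subsequential liminf along $n=Nm$ coincides with the full liminf $\Phi(\eta)$, which holds because for $Nm\le n<N(m+1)$ the ratios $\#(A_\eta\cap[0,n])/n$ differ by $O(1/m)$.
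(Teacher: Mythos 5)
Your proposal is correct and follows essentially the same route as the paper: the four inequalities you derive from uniform continuity of $f,\dots,f^N$ together with the counting of indices modulo $N$ are precisely the content of the paper's Lemma~1, and your passage to the limit $\delta\to 0^+$ (yielding $\Phi(0)=\widetilde\Phi(0)$ and $\Phi^*(0)=\widetilde\Phi^*(0)$) is an equivalent repackaging of the paper's Lemmas~2 and~3, which instead carry the constant $c$ through the same inequalities. The only cosmetic difference is that the paper phrases steps (iii)--(iv) via the complementary sets of times where the orbits are $\ge s$ apart, while you work directly with $\lfloor k/N\rfloor+1\in B_\delta$; the density bookkeeping you flag at the end is handled correctly.
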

Theorem 2 follows by Lemmas 2 and 3 below. Lemma 1 is technical and follows by uniform continuity of $f$. Lemma 2 shows that if $(x,y)$ is DC$2\frac{1}{2}$ with respect to $f^n$ then $(x,y)$ is DC$2\frac{1}{2}$ with respect to $f$, while the opposite implication is proved by Lemma 3.\\
For a given function $f$, integer $N$ and two points $x,y$ in $X$, denote the distribution functions with respect to $f$ by $\Phi$
$$\Phi(\delta)=\liminf_{k\to\infty}\frac{1}{k}\#\{0\leq i<k; d(f^i(x),f^i(y))<\delta\},$$
$$\Phi^*(\delta)=\liminf_{k\to\infty}\frac{1}{k}\#\{0\leq i<k; d(f^i(x),f^i(y))<\delta\},$$
and with respect to $f^n$ by $\Psi$
$$\Psi(\delta)=\liminf_{k\to\infty}\frac{1}{k}\#\{0\leq i<k; d(f^{iN}(x),f^{iN}(y))<\delta\},$$
$$\Psi^*(\delta)=\liminf_{k\to\infty}\frac{1}{k}\#\{0\leq i<k; d(f^{iN}(x),f^{iN}(y))<\delta\}.$$
\begin{lemma}  For every $s>0$, there are numbers $t_1>0,t_2>0$ such that 
\begin{align*}
(i)&\ \Psi^*(t_1)\leq \Phi^*(s),&
(ii)&\Psi(t_1)\leq \Phi(s),&\\
(iii)&\Phi(t_2)\leq \Psi(s),&
(iv)&\Phi^*(t_2)\leq \Psi^*(s).&
\end{align*}

\end{lemma}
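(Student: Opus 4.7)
The plan is to exploit uniform continuity of the iterates $f,f^2,\ldots,f^{N-1}$ on the compact space $X$. For any $s>0$ one obtains a single $t>0$ such that $d(u,v)<t$ implies $d(f^j(u),f^j(v))<s$ for every $j\in\{0,1,\ldots,N-1\}$, and I would take $t_1=t_2=t$. The four inequalities then reduce to two symmetric counting arguments, depending on whether the small-distance hypothesis lives at the $\Psi$-scale (parts (i)--(ii)) or at the $\Phi$-scale (parts (iii)--(iv)).

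For (i) and (ii), set $A=\{i\geq 0:d(f^{iN}(x),f^{iN}(y))<t\}$ and $B=\{i\geq 0:d(f^i(x),f^i(y))<s\}$. If $i\in A$ then all $N$ consecutive indices $iN,iN+1,\ldots,iN+N-1$ belong to $B$, which yields the pointwise counting bound
\[
\frac{\#(A\cap[0,k))}{k}\leq\frac{\#(B\cap[0,kN))}{kN}\qquad\text{for every }k\in\mathbb{N}.
\]
Taking $\limsup_k$ and using that the $\limsup$ along a subsequence is dominated by the $\limsup$ along the full sequence gives (i). The analogous $\liminf_k$ argument for (ii) fails, because the $\liminf$ along the sparse subsequence $\{kN\}$ may strictly exceed $\Phi(s)$. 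The fix is to run the argument backwards: pick $m_j\to\infty$ with $\#(B\cap[0,m_j))/m_j\to\Phi(s)$, set $k_j=\lfloor m_j/N\rfloor$, and use $m_j/(k_jN)\to 1$ to conclude
\[
\Psi(t)\leq\liminf_j\frac{\#(A\cap[0,k_j))}{k_j}\leq\liminf_j\frac{\#(B\cap[0,m_j))}{m_j}\cdot\frac{m_j}{k_jN}=\Phi(s).
\]

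Parts (iii) and (iv) run the dual argument. Redefine $B=\{i\geq 0:d(f^i(x),f^i(y))<t\}$ and $A=\{k\geq 0:d(f^{kN}(x),f^{kN}(y))<s\}$; for $i\in B$ the integer $k=\lceil i/N\rceil$ satisfies $kN-i\in\{0,1,\ldots,N-1\}$, so $k\in A$. Since the map $i\mapsto\lceil i/N\rceil$ is at most $N$-to-one,
\[
\frac{\#(B\cap[0,m))}{m}\leq\frac{\#(A\cap[0,\lceil m/N\rceil+1))}{\lceil m/N\rceil+1}\cdot\frac{N(\lceil m/N\rceil+1)}{m}.
\]
The last factor tends to $1$, and $\lceil m/N\rceil+1$ exhausts every integer $\geq 2$ as $m\to\infty$, so both $\liminf_m$ and $\limsup_m$ of the right-hand side reduce to the corresponding quantity for $\#(A\cap[0,k))/k$; this yields (iii) and (iv) simultaneously.

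The main obstacle is really just the bookkeeping behind (ii): the naive passage to $\liminf$ on both sides of the pointwise bound produces a $\liminf$ along a sparse arithmetic progression, which is in principle strictly larger than the target $\Phi(s)$. The remedy described above --- reversing the direction and starting from a near-optimal sequence for $\Phi(s)$ --- works because the normalisation ratio $m_j/(k_jN)$ tends to $1$. Once this technicality is in place, all four assertions follow uniformly from the chosen $t$.
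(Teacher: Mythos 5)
Your proof is correct and follows essentially the same route as the paper: uniform continuity of $f,f^2,\ldots,f^{N-1}$ to transfer closeness between a time $i$ and the nearby multiple of $N$, followed by an (at most) $N$-to-one counting comparison, with the same care about which side of the inequality carries the full sequence versus the sparse subsequence when passing to $\liminf$. The only cosmetic difference is in (iii)--(iv), where the paper states the same uniform-continuity fact contrapositively (with a separate constant $t_2$) and counts the far times $\delta_n$ before complementing, while you keep a single $t$ and count close times directly; both reduce to the identical combinatorial estimate.
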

\begin{proof}
Denote $$\xi_n(f,s)=\#\{0\leq i<n: d(f^i(x),f^i(y))<s\}$$ and $$\delta_n(f,s)=\#\{0\leq i<n: d(f^i(x),f^i(y))\geq s\}.$$
(i) By uniform continuity of $f$, for any $s>0$ there is $t_1>0$ such that $d(x,y)<t_1$ implies $d(f^i(x),f^i(y))< s$, for $i=0,1,2,\ldots,N-1$. Then
$$N\cdot\xi_{n}(f^N,t_1)\leq \xi_{N\cdot n}(f,s),$$
or equivalently
$$\frac{1}{n}\xi_{n}(f^N,t_1)\leq \frac{1}{N\cdot n}\xi_{N\cdot n}(f,s).$$
After taking the limit superior of both sides, we get $\Psi^*(t_1)\leq \Phi^*(s)$, since limit superior of the right side is less or equal to $\Phi^*(s)$ by the definition of upper distribution function.\\
(ii) Since $d(x,y)<t_1$ implies $d(f^i(x),f^i(y))< s$, for $i=0,1,2,\ldots,N-1$,
$$N\cdot\xi_{\frac{n}{N}}(f^N,t_1)\leq\xi_n(f,s).$$
 After dividing by $n$ and taking the limit inferior of both sides,
$$\liminf_{n\to\infty}\frac{1}{\frac{n}{N}}\xi_{\frac{n}{N}}(f^N,t_1)\leq \Phi(s),$$
where the left side is greater or equal to $\Psi(t_1)$ by the definition of the lower distribution function, which finishes the proof of the second claim. \\
(iii) Since $f$ is uniformly continuous, there exists $t_2>0$ such that $d(f^N(x),f^N(y))\geq s$ implies $d(f^i(x),f^i(y))\geq t_2$, for $i=1,2,\ldots,N$. Therefore
$$N\cdot(\delta_{n}(f^N,s)-1)\leq \delta_{N\cdot n}(f,t_2),$$
or equivalently
$$1-\frac{1}{n}\delta_{n}(f^N,s)+\frac{1}{n}\geq 1-\frac{1}{N\cdot n}\delta_{N\cdot n}(f,t_2).$$
Since $\frac{1}{n}\xi_{n}(f^N,s)+\frac{1}{n}\delta_{n}(f^N,s)=1$, and similarly for $f$,
$$\frac{1}{n}\xi_{n}(f^N,s)+\frac{1}{n}\geq \frac{1}{N\cdot n}\xi_{N\cdot n}(f,t_2).$$
After taking the limit inferior of both sides, we get $\Psi(s)\geq \Phi(t_2)$, since limit inferior of the right side is greater or equal to $\Phi(t_2)$ by the definition of lower distribution function.\\
(iv) Since $d(f^N(x),f^N(y))\geq s$ implies $d(f^i(x),f^i(y))\geq t_2$, for $i=1,2,\ldots,N$,
$$N\cdot(\delta_{\frac{n}{N}}(f^N,s)-1)\leq\delta_n(f,t_2),$$ or equivalently
\begin{equation}\label{lemma}1-\frac{1}{\frac{n}{N}}\delta_{\frac{n}{N}}(f^N,s)+\frac{1}{\frac{n}{N}} \geq 1-\frac{1}{n}\delta_n(f,t_2).\end{equation}
Since $\frac{1}{n}\xi_{n}(f,t_2)+\frac{1}{n}\delta_{n}(f,t_2)=1$, and similarly for $f^N$,
 $$\frac{1}{\frac{n}{N}}\xi_{\frac{n}{N}}(f^N,s)+\frac{1}{\frac{n}{N}}\geq \frac{1}{n}\xi_n(f,t_2).$$
Taking the limit superior of both sides,
$$\limsup_{n\to\infty}\frac{1}{\frac{n}{N}}\xi_{\frac{n}{N}}(f^N,s)\geq \Phi^*(t_2),$$
where the left side is less or equal to $\Psi^*(s)$ by the definition of the upper distribution function.
\end{proof}
\begin{lemma} If there are $p>0$ and $c>0$ such that $\Psi^*(\delta)>c>\Psi(\delta)$, for any $0<\delta\leq p$, then there is $q>0$ such that
$\Phi^*(s)>c>\Phi(s)$, for any $0<s\leq q$.
\end{lemma}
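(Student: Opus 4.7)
The strategy is to combine the four inequalities of Lemma 1 with the hypothesized gap $\Psi(\delta) < c < \Psi^*(\delta)$ on the interval $(0,p]$, translating each side of the gap into the corresponding statement for $\Phi$ and $\Phi^*$.

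For the upper bound $\Phi(s) < c$, I would apply Lemma 1(iii) at the single value $p$: this yields a $t_2 > 0$ (coming from a uniform continuity modulus of $f$) with $\Phi(t_2) \leq \Psi(p)$, and the hypothesis gives $\Psi(p) < c$. Setting $q := t_2$ and using the obvious monotonicity of $\Phi$ in its argument yields $\Phi(s) \leq \Phi(q) \leq \Psi(p) < c$ for every $0 < s \leq q$.

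For the lower bound $\Phi^*(s) > c$, I would fix an arbitrary $s > 0$ and invoke Lemma 1(i) to obtain $t_1 > 0$ with $\Psi^*(t_1) \leq \Phi^*(s)$. The number $t_1$ is just a modulus of uniform continuity, so it may be replaced by any smaller positive value without spoiling the inequality $N\cdot\xi_n(f^N,t_1) \leq \xi_{Nn}(f,s)$ from which (i) is derived; in particular we may insist $t_1 \leq p$. The hypothesis then gives $\Psi^*(t_1) > c$, whence $\Phi^*(s) > c$ for every $s > 0$, and in particular for every $0 < s \leq q$.

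Combining the two steps produces $\Phi(s) < c < \Phi^*(s)$ for all $0 < s \leq q$, which is the desired conclusion. There is no real conceptual obstacle: the argument is essentially bookkeeping built on top of Lemma 1. The only point that needs attention is confirming that the auxiliary thresholds $t_1, t_2$ produced by the uniform continuity moduli can always be pushed into $(0,p]$, where the hypothesized gap between $\Psi$ and $\Psi^*$ is effective; this is immediate from the monotonicity of $\Psi$ and $\Psi^*$ in their argument.
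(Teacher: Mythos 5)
Your proposal is correct and follows essentially the same route as the paper: Lemma 1(i) gives $\Phi^*(s)\geq\Psi^*(t_1)>c$ for every $s>0$, and Lemma 1(iii) applied at $p$ gives $q=t_2$ with $\Phi(q)\leq\Psi(p)<c$, finished off by monotonicity of $\Phi$. Your remark about pushing $t_1$ into $(0,p]$ is a valid way to justify $\Psi^*(t_1)>c$ (monotonicity of $\Psi^*$ would also do it directly), and the paper leaves this point implicit.
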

\begin{proof}
By Lemma 1 (i), for any $s>0$ there is $t_1>0$ such that $\Phi^*(s)\geq \Psi^*(t_1)>c$ whence $\Phi^*(s)>c$, for any $s>0.$\\
By Lemma 1 (iii) there is a $q>0$ such that $\Phi(q)\leq \Psi (p)< c$. Since $\Phi$ is nondecreasing, we have $\Phi(s)\leq\Phi(q)$ whenever $0< s\leq q$.
\end{proof}
\begin{lemma} If there are $p>0$ and $c>0$ such that $\Phi^*(\delta)>c>\Phi(\delta)$, for any $0<\delta\leq p$, then there is $q>0$ such that
$\Psi^*(s)>c>\Psi(s)$, for any $0<s\leq q$.
\end{lemma}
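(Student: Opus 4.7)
The plan is to dualize the proof of Lemma~2, replacing its uses of Lemma~1 parts (i) and (iii) by the corresponding parts (iv) and (ii). Part (ii) is (iii) with the roles of $\Phi$ and $\Psi$ swapped, and (iv) is (i) with $\Phi$ and $\Psi$ swapped, so the same argument applied in the reverse direction should transfer a hypothesis on $(\Phi,\Phi^*)$ to a conclusion on $(\Psi,\Psi^*)$.

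For the upper distribution I would apply Lemma~1 (iv): for every $s>0$ there exists $t_2>0$ with $\Phi^*(t_2)\leq\Psi^*(s)$. The hypothesis $\Phi^*(\delta)>c$ on $(0,p]$ together with monotonicity of $\Phi^*$ forces $\Phi^*(\delta)>c$ for every $\delta>0$; in particular $\Phi^*(t_2)>c$, whence $\Psi^*(s)\geq\Phi^*(t_2)>c$ for every $s>0$. So no restriction on $s$ is imposed from this side.

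For the lower distribution I would apply Lemma~1 (ii) with the specific choice $s=p$: this produces some $t_1>0$ such that $\Psi(t_1)\leq\Phi(p)<c$. Setting $q:=t_1$ and using that $\Psi$ is nondecreasing yields $\Psi(s)\leq\Psi(q)<c$ for every $0<s\leq q$. Combined with the previous paragraph this gives the desired strict inequality $\Psi^*(s)>c>\Psi(s)$ on the whole of $(0,q]$.

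I do not anticipate any genuine obstacle; the proof is a mechanical symmetrization of Lemma~2. The only point that deserves a moment of care is checking that Lemma~1 (ii) and (iv) really do point their inequalities in the right direction to transport an upper bound on $\Phi$ to an upper bound on $\Psi$ (at a shifted argument $t_1$), and a lower bound on $\Phi^*$ to a lower bound on $\Psi^*$ (at a shifted argument $t_2$); but this is exactly how they are stated, so the argument goes through verbatim.
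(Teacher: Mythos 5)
Your proposal is correct and coincides with the paper's own proof: it uses Lemma~1~(iv) together with monotonicity of $\Phi^*$ to get $\Psi^*(s)>c$ for all $s>0$, and Lemma~1~(ii) applied at $s=p$ plus monotonicity of $\Psi$ to get $\Psi(s)<c$ on $(0,q]$. The extra remark that the hypothesis must first be extended from $(0,p]$ to all $\delta>0$ via monotonicity of $\Phi^*$ is a small point the paper leaves implicit, but the argument is the same.
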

\begin{proof}
By Lemma 1 (iv), for any $s>0$ there is $t_2>0$ such that $\Psi^*(s)\geq \Phi^*(t_2)>c$ whence $\Psi^*(s)>c$, for any $s>0.$\\
By Lemma 1 (ii) there is $q>0$ such that $\Psi(q)\leq \Phi (p)< c$. Since $\Psi$ is nondecreasing, $\Psi(s)\leq\Psi(q)$, for $0< \delta\leq q$.
\end{proof}
\section{Chaotic sets for iterated function}
Authors in \cite{Li} and \cite{W} understood the distributional chaos as the existence of a chaotic pair and we assumed the same in previous sections. Nowadays chaos is usually defined as the existence of an uncountable chaotic set. That arises a natural question - are results for the existence of uncountable chaotic sets for iterated function the same as for the existence of pairs? \\
We discovered that in case of DC1, DC2 and DC$2\frac{1}{2}$, the answer is easy - if $(x,y)$ is DC1 or DC2 with respect to $f$ then it is the same with respect to $f^n$. Therefore the existence of uncountable distributionally chaotic set with respect to $f$ ensures the same with respect to $f^n$. But for distributional chaos of type 3 is the situation more complicated - if $(x,y)$ is DC3 with respect to $f$ then there is $j\in\{0,1,\ldots, n-1\}$ such that $(f^j(x),f^j(y))$ is DC3 with respect to $f^n$. This $j$ can be different for different pairs in chaotic set, hence  the original chaotic set can split into chaotic pairs or into chaotic sets with smaller cardinality.\\
Let $S$ be a DC3 set with respect to $f$. We can generate an undirected graph $G$ in the following way - the set of vertices of $G$ is labeled by all points in $S$ and we add an edge between vertices $x$ and $y$ if $(x,y)$ is a chaotic pair. Then for a fixed $x\in S$ there is exactly one edge leading to every $y\in S\setminus\{x\}$ - hence $G$ is a complete graph. Next we assign a color $c_j$ to each number $j\in\{0,1,\ldots, n-1\}$. We color the graph $G$ with colors $c_0,\ldots,c_{n-1}$ in such a way that edge between $x$ and $y$ has color $c_j$ if $(f^j(x),f^j(y))$ is chaotic pair with respect to $f^n$. By our previous results about DC$2\frac{1}{2}$ pairs and by results about DC3 pairs in \cite{JD}, there is always at least one such $j$ (in case of multiple choices for $j$ we pick one randomly). Hence graph $G$ was colored by $n$ colors and we can use Ramsey theory to find a complete monochromatic subgraph which will represent a chaotic set with respect to $f^n$.\\
Let us recall a classic result from \cite{R}, reformulated for our purposes:
\begin{theorem} Let $G$ be a complete graph with infinite set of vertices and let each edge in this graph be colored by exactly one of colors $c_0,\ldots,c_{n-1}$. Then $G$ contains an infinite subgraph $H$ such that edges between every two distinct vertices in $H$ has the same color $c_i$, for some $i\in\{0,1,\ldots, n-1\}$.
\end{theorem}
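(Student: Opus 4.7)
The plan is to prove Theorem 3 by the standard iterated pigeonhole construction (Ramsey's original argument for the infinite two-color case, extended to $n$ colors without substantive change). The construction produces an infinite sequence of vertices $v_1,v_2,\ldots$ together with a nested sequence of infinite vertex sets $V_1\supseteq V_2\supseteq\ldots$, such that every edge from $v_k$ into $V_k$ has one fixed color $d_k\in\{c_0,\ldots,c_{n-1}\}$; a final application of pigeonhole to the sequence $(d_k)$ extracts the monochromatic subgraph.

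First I would carry out the inductive extraction. Put $V_0$ equal to the vertex set of $G$ and pick any $v_1\in V_0$. Since $V_0\setminus\{v_1\}$ is infinite and only $n$ colors are used on the edges incident to $v_1$, by the pigeonhole principle there exist a color $d_1$ and an infinite $V_1\subseteq V_0\setminus\{v_1\}$ such that the edge from $v_1$ to every vertex of $V_1$ has color $d_1$. Suppose $v_1,\ldots,v_k$, colors $d_1,\ldots,d_k$, and infinite sets $V_1\supseteq\cdots\supseteq V_k$ have been chosen so that $v_j\in V_{j-1}$ and every edge from $v_j$ into $V_j$ has color $d_j$. Choose $v_{k+1}\in V_k$ arbitrarily; by pigeonhole again there is a color $d_{k+1}$ and an infinite $V_{k+1}\subseteq V_k\setminus\{v_{k+1}\}$ such that the edge from $v_{k+1}$ to every vertex of $V_{k+1}$ has color $d_{k+1}$. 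This is a routine application of dependent choice.

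Next, I would harvest the monochromatic subgraph. The sequence $(d_k)_{k\geq 1}$ takes values in the finite set $\{c_0,\ldots,c_{n-1}\}$, so some color $c_i$ satisfies $K:=\{k:d_k=c_i\}$ being infinite. For any $k<\ell$ in $K$, the construction gives $v_\ell\in V_{\ell-1}\subseteq V_k$, so the edge $(v_k,v_\ell)$ has color $d_k=c_i$. Hence the subgraph $H$ induced on $\{v_k:k\in K\}$ is infinite, complete, and monochromatic of color $c_i$, as required.

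No genuine obstacle is anticipated; the only bookkeeping subtlety is to ensure that each $V_{k+1}$ is chosen inside $V_k\setminus\{v_{k+1}\}$ (not merely inside $V_k$), so that the vertices $v_1,v_2,\ldots$ are automatically distinct and each later vertex genuinely inherits the color $d_k$ from each earlier $v_k$. Since the paper uses this theorem only as a black box cited from \cite{R}, the proposal is essentially to record this textbook argument for completeness.
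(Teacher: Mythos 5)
Your proof is correct and complete: the nested sequence of infinite sets $V_0\supseteq V_1\supseteq\cdots$ with the pivot vertices $v_k$, followed by a final pigeonhole on the colors $d_k$, is the standard argument for the infinite Ramsey theorem for pairs with $n$ colors, and your bookkeeping (choosing $V_{k+1}\subseteq V_k\setminus\{v_{k+1}\}$ so that the $v_k$ are distinct and $v_\ell\in V_{\ell-1}\subseteq V_k$ for $k<\ell$) is exactly what is needed. The paper itself gives no proof of this statement --- it is quoted as a classical result from Ramsey's 1930 paper and used as a black box --- so there is nothing to compare against; your write-up simply supplies the textbook argument the paper omits.
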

An immediate consequence of this Infinite Ramsey Theorem is the following Corollary:
\begin{corollary} Let S be an infinite distributionally chaotic set of type 3 with respect to $f$. Then there is an infinite subset $R\subset S$ such that $f^j(R)$ is distributionally chaotic set of type 3 with respect to $f^n$, where $j\in\{0,1,\ldots, n-1\}$.
\end{corollary}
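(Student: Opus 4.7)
The plan is to carry out the graph-coloring argument outlined in the paragraph preceding the corollary in a rigorous way, with Theorem 3 doing the decisive work. I would start by letting $S$ be the given infinite DC3 set with respect to $f$ and forming the complete graph $G$ whose vertex set is $S$ and whose edges are the two-element subsets of $S$. The next task is to define an edge coloring $c : E(G) \to \{c_0, c_1, \ldots, c_{n-1}\}$ that records, for each chaotic pair, an iterate witnessing chaos with respect to $f^n$.

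To build this coloring, I would fix an unordered pair $\{x,y\} \subset S$; by hypothesis $(x,y)$ is DC3 with respect to $f$, so by the result of Dvo\v{r}\'akov\'a cited from \cite{JD} there is at least one index $j \in \{0,1,\ldots,n-1\}$ such that $(f^j(x), f^j(y))$ is DC3 with respect to $f^n$. I would simply pick one such $j$ (arbitrarily, should more than one exist) and declare $c(\{x,y\}) = c_j$. This makes the coloring well-defined, and the crucial input is exactly Dvo\v{r}\'akov\'a's existence statement, so no additional analysis of distribution functions is needed at this stage.

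With the coloring in hand, I would apply Theorem 3 directly: since $G$ is complete on the infinite vertex set $S$ and edges are colored by the finitely many colors $c_0, \ldots, c_{n-1}$, the Infinite Ramsey Theorem yields an infinite $R \subset S$ and an index $j \in \{0,1,\ldots,n-1\}$ such that every edge joining two distinct vertices of $R$ carries color $c_j$. Unpacking the definition of the coloring, this says precisely that $(f^j(x), f^j(y))$ is DC3 with respect to $f^n$ for every pair of distinct points $x, y \in R$.

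It remains to observe that $f^j(R)$ is itself infinite and thus serves as the desired DC3 set. This is automatic: if $x \neq y$ in $R$ but $f^j(x) = f^j(y)$, then the pair $(f^j(x), f^j(y))$ would be the diagonal pair, whose distribution functions are identically $1$ on $(0, \mathrm{diam}\,X]$, contradicting DC3. Hence $f^j$ is injective on $R$, so $f^j(R)$ is infinite, and by the previous paragraph every two distinct points of $f^j(R)$ form a DC3 pair with respect to $f^n$. There is no real obstacle here beyond setting the bookkeeping up correctly; the content is packaged into Dvo\v{r}\'akov\'a's result and into the Ramsey theorem, and the corollary is essentially their conjunction.
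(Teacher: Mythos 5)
Your proposal is correct and follows exactly the paper's argument: form the complete graph on $S$, color each edge by an index $j$ witnessing DC3 for $f^n$ via Dvo\v{r}\'akov\'a's result, and extract an infinite monochromatic subgraph with Theorem 3. The extra observation that $f^j$ is injective on $R$ (so $f^j(R)$ is genuinely infinite) is a small but welcome addition that the paper leaves implicit.
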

Unfortunately in case of infinite \emph{uncountable} graphs is the existence of an \emph{uncountable} monochromatic subgraph not ensured - Sierpinski coloring serves as an example in \cite{S}. Thus we pose an open question:\\

{\bf Question} \emph{Does the existence of an uncountable distributionally chaotic set of type 3 with respect to $f$ imply the same with respect to $f^n$?}

\paragraph{\bf Acknowledgment}
I sincerely thank my supervisor, Professor Jaroslav Sm\' ital, for valuable guidance. I~am grateful for his constant support and help.

\end{document}